\documentclass{amsart}
\usepackage{amsfonts,amssymb,amscd,amsmath,enumerate,verbatim,calc}
\usepackage{amscd,amssymb,amsopn,amsmath,amsthm,graphics,amsfonts,enumerate,verbatim,calc}
\usepackage[dvips]{graphicx}
\usepackage[colorlinks=true,linkcolor=red,citecolor=blue]{hyperref}
\input xy
\xyoption{all}
\addtolength{\textwidth}{1.5cm}
\calclayout

\newcommand{\rt}{\rightarrow}
\newcommand{\lrt}{\longrightarrow}

\newcommand{\st}{\stackrel}

\newcommand{\la}{\lambda}
\newcommand{\La}{\Lambda}

\newcommand{\lan}{\langle}
\newcommand{\ran}{\rangle}

\newcommand{\C}{\mathbb{C} }
\newcommand{\D}{\mathbb{D} }

\newcommand{\N}{\mathbb{N} }
\newcommand{\Z}{\mathbb{Z} }

\newcommand{\CA}{\mathcal{A} }

\newcommand{\CI}{\mathcal{I} }

\newcommand{\CM}{\mathcal{M} }

\newcommand{\CT}{\mathcal{T} }
\newcommand{\CX}{\mathcal{X} }

\newcommand{\mmod}{{\textsf{mod}}}

\newcommand{\Prj}{{\textsf{Proj}}}

\newcommand{\GPrj}{{\textsf{GProj}}}
\newcommand{\Gprj}{{\textsf{Gproj}}}

\newcommand{\im}{{\rm{Im}}}

\newcommand{\add}{{\rm{add}}}

\newcommand{\id}{{\rm{id}}}

\newcommand{\Ggldim}{{\rm{Ggldim}}}

\newcommand{\derdim}{{\rm{der.dim}}}

\newcommand{\gldim}{{\rm{gl.dim}}}

\newcommand{\Gpd}{{\rm{Gpd}}}

\newcommand{\Ker}{{\rm{Ker}}}

\newcommand{\Hom}{{\rm{Hom}}}

\newcommand{\End}{{\rm{End}}}

\newtheorem{theorem}{Theorem}[section]
\newtheorem{corollary}[theorem]{Corollary}
\newtheorem{lemma}[theorem]{Lemma}
\newtheorem{proposition}[theorem]{Proposition}

\theoremstyle{definition}
\newtheorem{definition}[theorem]{Definition}

\newtheorem{remark}[theorem]{Remark}

\theoremstyle{plain}

\theoremstyle{definition}

\numberwithin{equation}{section}

\begin{document}

\title[Derived dimension of abelian categories]{On the derived dimension of abelian categories}

\author[Asadollahi and Hafezi]{Javad Asadollahi and Rasool Hafezi}

\address{Department of Mathematics, University of Isfahan, P.O.Box: 81746-73441, Isfahan, Iran and School of Mathematics, Institute for Research in Fundamental Science (IPM), P.O.Box: 19395-5746, Tehran, Iran } \email{asadollahi@ipm.ir}
\email{r.hafezi@sci.ui.ac.ir}

\subjclass[2010]{18E10, 18E30, 18A40, 16E05, 16G60}

\keywords{Triangulated category, derived category, derived dimension, artin algebra}

\thanks{The research of authors was in part supported by a grant from IPM (No. 90130216).}

\begin{abstract}
We give an upper bound on the dimension of the bounded derived category of an abelian category. We show that if $\CX$ is a sufficiently nice subcategory of an abelian category, then derived dimension of $\CA$ is at most $\CX$-dim$\CA$, provided $\CX$-dim$\CA$ is greater than one. We provide some applications.
\end{abstract}

\maketitle

\section{Introduction}
Motivated by the work of Bondal and Van den Berg, Rouquier attached a numerical invariant to a triangulated category; its dimension. This dimension gives a new invariant for algebras and algebraic varieties under derived equivalences. Roughly speaking, it measures the minimum steps one requires to build the whole triangulated category $\CT$ out of a single object $M$ for details see \cite{Rq2}. Rouquier's results enabled him to give the first example of artin algebras of representation dimension greater than three \cite{Rq1}. Therefore he could solve a long standing open problem started with the Auslander's work in his Queen Mary notes in 1971.

The notion of dimension of triangulated categories then took into account by several authors and has found several applications in different subjects ranging from algebraic geometry to representation theory of algebras and to commutative ring theory. In the setting of artin algebras, it provides a lower bound for the representation dimension of an artin algebra. Moreover, it is related to some known invariants like Lowey length and global dimension, see e.g. \cite{ABIM}, \cite{KK}, \cite{Opp}, \cite{O} and \cite{Rq1}.

Let $\CA$ be an abelian category. It is shown by Han \cite{H} that if $\CA$ is of finite representation type, that is, if there exists an object $M$ in $\CA$ with $\CA=\add M$, then $\dim\D^b(\CA)\leq 1$, where $\D^b(\CA)$ denotes the bounded derived category of $\CA$. The invariant $\dim\D^b(\CA)$ is known as the derived dimension of $\CA$, denoted by $\derdim\CA$. Krause and Kussin \cite[Proposition 3.4]{KK} showed that if $\CA$ is an abelian category with a representation generator $M$, then $\dim\D^b(\CA)\leq \gldim\End_{\CA}(M)$. An $\add M$-resolution of $X$ is an exact sequence $\cdots \rt M_n \rt \cdots \rt M_1 \rt M_0 \rt X \rt 0$ such that $M_i\in \add M$ and the sequence is exact with respect to the functor $\Hom(M, \ )$. An object $M$ in $\CA$ is called a representation generator if every object $X \in \CA$ admits an $\add M$-resolution of finite length.

In this note, we show that if $\CX$ is a sufficiently nice subcategory of an abelian category $\CA$, which is of finite representation type, then $\dim\D^b(\CA)\leq \CX$-$\dim\CA$, provided $\CX$-dim$\CA\geq 1$. Our proof is based on the structure of projective objects in the category $\C(\CA)$, the category of complexes over $\CA$. As applications, we obtain a generalization of the main theorem of \cite{H}, provide a simple proof for the Corollary 6.7 of \cite{B} (with some weaker conditions) and deduce Proposition 2.6 of \cite{KK} as a corollary. Furthermore, we get an upper bound on the derived dimension of $n$-torsionless-finite algebras.

\section{Preliminaries}
\noindent {\bf Setup.} Throughout we assume that $\CA$ is an abelian category. We denote the category of complexes over $\CA$ by $\C(\CA)$. We grade the complexes homologically, so an object $A=(A_n,f_n)$ of $\C(\CA)$ will be written as
\[A=(A_n,f_n): \ \ \ \ \cdots \lrt A_{n+1} \st{f_{n+1}}{\lrt} A_n  \st{f_{n}}{\lrt} A_{n-1} \lrt \cdots.\]
For any integer $n$, we set $Z_nA:=\Ker f_n$ and $B_nA:=\im f_{n+1}$.\\

\s {\sc Projective objects of $\C(\CA)$.}\label{proj}
Let $\CA$ has enough projective objects. By definition, a complex $P$ in $\C(\CA)$ is a projective object if the functor $\Hom(P, \ )$ is exact, where for any complexes $A$ and $A'$, $\Hom(A,A')$ denotes the abelian group of chain maps from $A$ to $A'$. By \cite{Ro}, this is equivalent to say that $P$ is exact and $Z_nP$ is projective, for all $n \in \Z$. So, for any projective object $P$, the complex \[\cdots \rt 0 \rt P \rt P \rt 0 \rt \cdots,\]
is projective. It is known \cite{Ro} that any projective object in $\C(\CA)$ can be written uniquely as a coproduct of such complexes. \\

\s {\sc Evaluation functor and its extension.}\label{EvalFunc}
Let $i \in \Z$. There is an evaluation functor $e^i:\C(\CA) \rt \CA$, that restricts any complex $A$ to its $i$-th term $A_i$. It is well known that this functor admits a left adjoint, which we will denote by $e^i_{\la}$. In fact, this adjoint is defined as follows. Given any object $A$ of $\CA$, $e^i_{\la}(A)$ is defined to be the complex
\[\cdots 0 \rt A \st{\id}{\rt} A \rt 0 \cdots,\]
with the $A$ on the left hand side sits on the $i$-th position. It follows from the properties of the adjoint pair $(e^i_{\la},e^i)$ that if $P$ is a projective object in $\CA$ then the complex $e^i_{\la}(P)$ is a projective object in $\C(\CA)$. Moreover, by \ref{proj} any projective object in $\C(\CA)$ is a coproduct of $e^i_{\la}(P_i)$, for a family $\{P_i\}_{i \in I}$ of projective objects of $\CA$.\\

\s {\sc Dimension of triangulated categories.}
Let $\CT$ be a triangulated category and $\CI_1$ and $\CI_2$ be subcategories of $\CT$. Using the same notation as in \cite[\S 3.1]{Rq2}, we let $\CI_1 \ast \CI_2$ denote the full subcategory of $\CT$ consisting of objects $M$ such that there exists an exact triangle
\[\xymatrix{I_1 \ar[r] & M \ar[r] & I_2 \ar@{~>}[r] & },\]
in $\CT$ with $I_i \in \CI_i$, for $i=1,2$.

Let $\CI$ be a subcategory of $\CT$. Denote by $\lan\CI\ran$ the smallest full subcategory of $\CT$ that contains $\CI$ and is closed under taking finite coproducts, direct factors, and all shifts. Finally, set $\CI_1 \diamond \CI_2:=\lan\CI_1\ast \CI_2\ran$.

Inductively one defines ${\lan \CI\ran}_0 = 0$ and ${\lan \CI\ran}_n ={\lan \CI\ran}_{n-1}\diamond \lan \CI\ran$ for $n\geq1$.

\begin{definition}\cite[Definition 3.2]{Rq2}
Let $\CT$ be a triangulated category. The dimension of $\CT$, denoted by $\dim\CT$, is defined by
\[\dim\CT=\inf\{d \in \N \ | \ {\rm there \ exists} \ M \in \CT \ {\rm such \ that} \ \CT=\lan M\ran_{d+1} \}.\]
We set $\dim\CT=\infty$, if such $M$ does not exist.
\end{definition}

\s Let $\CM$ be a class of objects of an abelian category $\CA$. We denote by $\add\CM$ the objects that are isomorphic to a direct summand of a finite direct sum of objects in $\CM$. If $\CM=\{M\}$ contains a single object, we write $\add M$ instead of $\add\{M\}$. We say that a subcategory $\CX$ of $\CA$ is representation-finite if there exits an object $M \in \CX$ such that $\add\CX=\add M$.

\section{derived dimension of an abelian category}
Let us begin by a definition.

\begin{definition}
Let $\CX$ be a full subcategory of $\CA$. For any object $A \in \CA$, we define the $\CX$-dimension of $A$, denoted $\CX$-$\dim A$, as follows: if $A \in \CX$, then we set $\CX$-$\dim A=0$. If $t\geq 1$, then we set $\CX$-$\dim A\leq t$ if there exists an exact sequence
\[0 \rt X_t \rt \cdots \rt X_0 \rt A \rt 0\] where $X_i \in \CX$ for $0 \leq i \leq t$. We set $\CX$-$\dim A=t$ if $\CX$-$\dim A\leq t$ and $\CX$-$\dim A \nleqslant t-1$. Finally if $\CX$-$\dim A \neq t$ for any $t \geq 0$, we set $\CX$-$\dim A=\infty$. The $\CX$-global dimension of $A$, denoted $\CX$-$\dim\CA$, is defined to be $\sup\{\CX$-$\dim A  \ | \ A \in \CA \}$.
\end{definition}

Let $\CA$ be an abelian category with enough projectives. A subcategory $\CX$ of $\CA$ is said to be resolving if it contains the projective objects and is closed under extensions and kernels of epimorphisms. For our purpose in this section, some weaker conditions are enough.

\begin{definition}
Let $\CA$ be an abelian category with enough projectives. A full subcategory $\CX \subseteq \CA$ is called semi-resolving if it contains projective objects and satisfies the following property: for any short exact sequence
$ 0 \rt K \rt P \rt M \rt 0$ in $\CA$ with $P \in \Prj A $, if $M \in \CX$ then so is $K$, otherwise $\CX$-$\dim K=\CX$-$\dim M-1$.
\end{definition}

It is clear that any resolving subcategory of $\CA$ is semi-resolving.\\

The following lemma can be proved using the similar argument as in the proof of the Lemma 2.5 of \cite{KK}, see also \cite[Theorem 8.3]{C}. So we skip the proof.

\begin{lemma}\label{KrauseKussin}
Let $\CX \subseteq \CA$ be a semi-resolving subcategory of $\CA$, where $\CA$ is an abelian category with enough projectives. Let $d>0$ be an arbitrary integer. Let $A$ be a bounded complex in $\CA$ such that for all $i$, both $\CX$-$\dim B_iA$ and $\CX$-$\dim Z_iA$ are at most $d$. Then there exists an exact triangle
\[\xymatrix{L \ar[r] & P \ar[r] & A \ar@{~>}[r] & },\]
in $\D^b(\CA)$, in which $P$ is a bounded complex of projectives and $L$ is a complex such that for all $i$, both $\CX$-$\dim B_iL$ and $\CX$-$\dim Z_iL$ are at most $d-1$.
\end{lemma}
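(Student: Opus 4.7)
The plan is to construct a short exact sequence of bounded complexes
\[0 \to L \to P \to A \to 0\]
with $P$ a bounded complex of projectives in $\CA$, inducing the desired exact triangle $L \to P \to A \leadsto$ in $\D^b(\CA)$, and then derive the bounds on $\CX$-$\dim Z_iL$ and $\CX$-$\dim B_iL$ from the short exact sequences on cycles and boundaries to which the semi-resolving axiom can be applied.

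For the construction of $P$, I would execute one step of a Cartan--Eilenberg-type resolution. For each $i$, using that $\CA$ has enough projectives, choose projective objects $P_i^H \twoheadrightarrow \HT_iA$ and $P_i^B \twoheadrightarrow B_iA$, and set
\[P_i := P_i^H \oplus P_i^B \oplus P_{i-1}^B,\]
with differential $d_i^P \colon P_i \to P_{i-1}$ equal to the identity from the third summand of $P_i$ onto the middle summand of $P_{i-1}$ and zero on the other summands. The chain map $\pi\colon P \to A$ is assembled from a lift of $P_i^H \to \HT_iA$ to $Z_iA \hookrightarrow A_i$, the composition $P_i^B \to B_iA \hookrightarrow A_i$, and a lift of $P_{i-1}^B \to B_{i-1}A$ through $d_i^A \colon A_i \twoheadrightarrow B_{i-1}A$; all three lifts exist by projectivity. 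A routine check shows that $\pi$ is a chain map, each $\pi_i$ is surjective, and $P$ is bounded since $A$ is.

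The point of this particular shape of $P_i$ is that the differential is a split embedding onto a summand of $P_{i-1}$, so that one reads off
\[Z_iP = P_i^H \oplus P_i^B, \qquad B_iP = P_i^B,\]
both of which are projective in $\CA$. Moreover, by construction the restrictions $Z_iP \to Z_iA$ and $B_iP \to B_iA$ are surjective. Setting $L$ to be the termwise kernel of $\pi$, a short diagram chase (or the snake lemma) yields short exact sequences
\[0 \to Z_iL \to Z_iP \to Z_iA \to 0, \qquad 0 \to B_iL \to B_iP \to B_iA \to 0,\]
with projective middle terms. Since $\CX$ is semi-resolving, $\CX$-$\dim Z_iA \leq d$, $\CX$-$\dim B_iA \leq d$, and $d \geq 1$, the semi-resolving axiom immediately gives $\CX$-$\dim Z_iL \leq d-1$ and $\CX$-$\dim B_iL \leq d-1$, as required. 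The main subtle point is the decision to cover $Z_iA$ by the split pair $P_i^H \oplus P_i^B$ rather than by a single projective cover $P_i^Z \twoheadrightarrow Z_iA$: this is precisely what forces $Z_iP$ to be a direct summand of $P_i$ and hence projective, without which the semi-resolving axiom could not be invoked on the induced sequence of cycles.
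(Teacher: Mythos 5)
Your proof is correct, and it is worth noting that the paper itself does not prove this lemma: it defers to Lemma 2.5 of Krause--Kussin. Your argument is complete and self-contained, and it differs from the Krause--Kussin construction in the choice of the projective cover. Krause--Kussin cover each term $A_i$ by a projective $Q_i$ and take $P_i=Q_i\oplus Q_{i+1}$ with the shift differential, so that $P$ is contractible (a finite coproduct of complexes $e_{\la}^{i}(Q_i)$, i.e.\ a projective object of $\C(\CA)$); the price is that $Z_iP\rt Z_iA$ is \emph{not} surjective (its image is only $B_iA$), so the bounds on $Z_iL$ and $B_iL$ have to be extracted by identifying these objects as preimages under $Q_{i+1}\twoheadrightarrow A_{i+1}$ and invoking the semi-resolving property on slightly less obvious sequences. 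Your Cartan--Eilenberg-type cover $P_i=P_i^H\oplus P_i^B\oplus P_{i-1}^B$ instead arranges that $Z_iP$ and $B_iP$ are projective direct summands of $P_i$ and that the induced maps on cycles and boundaries are surjective, so the semi-resolving axiom applies verbatim to $0\rt Z_iL\rt Z_iP\rt Z_iA\rt 0$ and $0\rt B_iL\rt B_iP\rt B_iA\rt 0$; this is the cleaner route to the lemma as literally stated. The one trade-off you should be aware of is that your $P$ is generally \emph{not} a projective object of $\C(\CA)$: it has homology $H_iP\cong P_i^H$, and it decomposes as $\bigoplus_i S^i(P_i^H)\oplus\bigoplus_i e_{\la}^{i+1}(P_i^B)$ rather than as a coproduct of complexes $e_{\la}^{i}(Q)$ alone. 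The paper's later proofs (the theorem preceding Proposition \ref{Han} and Theorem \ref{main}) use exactly that stronger decomposition of $P$. This is not fatal --- the stalk summands $S^i(P_i^H)$ still lie in $\lan\CM\ran$ because projectives belong to the semi-resolving subcategory $\CX\subseteq\add M$ --- but if your version of the lemma is to feed into those arguments, this extra observation has to be supplied.
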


\noindent {\bf Notation.} Let $\CA$ be an abelian category. Given an object $M \in \CA$ and integer $i \in \Z$, we let $S^i(M)$ denote the complex with $M$ in the $i$-th place and 0 in the other places.\\

\begin{theorem}
Let $\CX \subseteq \CA$ be a semi-resolving subcategory of $\CA$, where $\CA$ is an abelian category with enough projectives. Assume that $\add\CX=\add M$, for some $M \in \CA$. Let $A=(A_n,f_n)$ be a bounded complex in $\CA$ such that for all $i$, both $\CX$-$\dim B_iA$ and $\CX$-$\dim Z_iA$ are at most $d$. Then $A \in \lan\CM\ran_{d+2}$, where $\CM=S^0(M) \oplus e_{\la}^0(M)$.
\end{theorem}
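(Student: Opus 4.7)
The plan is to argue by induction on $d$.

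For the base case $d=0$ we have $Z_iA,B_iA \in \CX \subseteq \add M$ for every $i$. I would consider the short exact sequence of complexes
\[ 0 \lrt ZA \lrt A \lrt BA' \lrt 0, \]
where $ZA$ denotes the subcomplex with $i$-th term $Z_iA$ and zero differentials, and $BA':=A/ZA$ has $i$-th term $A_i/Z_iA \cong B_{i-1}A$ with induced zero differentials (since $f_i(A_i)\subseteq Z_{i-1}A$). Both $ZA$ and $BA'$ are bounded complexes with zero differentials whose terms lie in $\add M$, so each decomposes in $\D^b(\CA)$ as a finite direct sum of shifts $S^i(M_i)$ with $M_i \in \add M$; in particular they belong to $\lan S^0(M)\ran\subseteq \lan\CM\ran$. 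The resulting exact triangle then places $A$ in $\lan\CM\ran \diamond \lan\CM\ran = \lan\CM\ran_2$.

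For the inductive step with $d\geq 1$, assume the statement for $d-1$. Lemma \ref{KrauseKussin} produces an exact triangle $L\to P\to A\to L[1]$ in $\D^b(\CA)$ with $P$ a bounded complex of projectives and $L$ a complex whose cycles and boundaries have $\CX$-dimension at most $d-1$. By the induction hypothesis, $L\in \lan\CM\ran_{d+1}$. Rotating to the triangle $P\to A\to L[1]\to P[1]$, it is enough to prove $P\in \lan\CM\ran_1$, since then $A\in\lan\CM\ran_1 \diamond \lan\CM\ran_{d+1}\subseteq \lan\CM\ran_{d+2}$.

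To realize $P$ inside $\lan\CM\ran_1$ I would use that the terms of $P$ are projective, hence in $\add M$. This is precisely where the summand $e^0_\la(M)$ of $\CM$ is needed, via the standard short exact sequence
\[ 0 \lrt P[-1] \lrt \bigoplus_{i} e^i_\la(P_i) \lrt P \lrt 0 \]
of \ref{proj} and \ref{EvalFunc}, which realizes $P$ as a quotient of a projective object of $\C(\CA)$ whose natural shape is a finite direct sum of shifts of $e^0_\la(M')$ with $M'\in\add M$. Combining this with the base-case decomposition applied to $P$, in which the possibly non-$\add M$ cycles and boundaries of $P$ are absorbed into the $e^i_\la$-components of the above sequence, I expect to exhibit $P$ as a direct summand of a finite sum of shifts of $S^0(M)$ and $e^0_\la(M)$, i.e. $P\in\lan\CM\ran_1$. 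The principal technical obstacle is exactly this last manoeuvre: matching the cycle/boundary decomposition with the projective-complex decomposition so that $P$ lands at the first level $\lan\CM\ran_1$ rather than in some deeper level of the filtration; this is the reason both summands of $\CM$, not only $S^0(M)$, are required.
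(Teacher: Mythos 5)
Your overall strategy coincides with the paper's: induction on $d$, the base case via the short exact sequence of complexes $0 \to ZA \to A \to BA' \to 0$ with zero differentials (which is exactly the paper's $0 \to K \to A \to I \to 0$), and the inductive step via Lemma \ref{KrauseKussin}. The base case and the formal bookkeeping with $\ast$ and associativity are fine. The gap is in the step $P \in \lan\CM\ran_1$, which you yourself flag as ``the principal technical obstacle.'' You treat $P$ as merely a bounded complex whose \emph{terms} are projective, and then try to reduce to a projective object of $\C(\CA)$ via the sequence $0 \to P[1] \to \bigoplus_i e^i_\la(P_i) \to P \to 0$. That manoeuvre is circular: the resulting triangle exhibits $P$ as an extension of a shift of $P$ itself by an object of $\lan e^0_\la(M)\ran$, so it can never force $P$ down to level one of the filtration. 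Worse, the statement you are implicitly trying to prove is false in general: a bounded complex with projective terms need not lie in $\lan\CM\ran_1$. For instance, over $\La=k[x]/(x^2)$ the complex $\La \st{x}{\to} \La$ has cohomology $k$ in two degrees, while every object of $\lan S^0(\La)\oplus e^0_\la(\La)\ran$ has cohomology that is a direct summand of a projective module; so no ``absorption'' argument can succeed for arbitrary complexes of projectives.

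The missing idea is that the complex $P$ furnished by Lemma \ref{KrauseKussin} is not an arbitrary complex of projectives but a \emph{projective object of} $\C(\CA)$: the Krause--Kussin construction takes epimorphisms $Q_i \twoheadrightarrow A_i$ with $Q_i$ projective and sets $P=\bigoplus_i e^i_\la(Q_i)$, using the adjunction of \ref{EvalFunc} to produce the surjection $P\to A$. By \ref{proj}, such a $P$ is precisely a finite coproduct of shifts $e^i_\la(Q_i)=e^0_\la(Q_i)[i]$, and since $\CX$ contains the projectives and $\add\CX=\add M$, each $Q_i$ is a direct summand of some $M^{k}$, whence $e^0_\la(Q_i)$ is a direct summand of $e^0_\la(M)^{k}$. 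Therefore $P \in \lan e^0_\la(M)\ran \subseteq \lan\CM\ran$ immediately, with no further work; this is exactly how the paper concludes, and it is the only point where the summand $e^0_\la(M)$ of $\CM$ is used. With that substitution your inductive step closes and the proof is complete.
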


\begin{proof}
We use induction on $d$. To begin, assume that $d=0$. As in \cite[Theorem]{H}, consider the complexes $K=(Z_nA, 0)$ and $I=(B_{n-1}A, 0)$ to get the exact sequence
\[0 \lrt K \st{i} \lrt A \st{\bar{f}} \lrt I \rt 0\]
of complexes, where for each $n \in \Z$, $i_n:Z_nA \rt A_n$ is the natural embedding and $\bar{f}_n:A_n \rt B_{n-1}A$ is the map $f_n$ whose coimage is restricted to $B_{n-1}A$. So we get the triangle
\[\xymatrix{K \ar[r] & A \ar[r] & I \ar@{~>}[r] & },\]
in $\D^b(\CA)$. Since $X$ is a bounded complex over $\CA$, $K=\bigoplus S^n(Z_nA)$ and $I=\bigoplus S^n(B_{n-1}A)$ have only finitely many nonzero summand. Hence by definition $K$ and $I$ belong to $\lan\CM\ran$. Thus $A \in \lan\CM\ran_2$. Assume inductively that $d>0$ and the result has been proved for smaller value than $d$. Lemma \ref{KrauseKussin} implies the existence of an exact triangle
\[\xymatrix{P \ar[r] & A \ar[r] & \sum L \ar@{~>}[r] & },\]
in $\D^b(\CA)$, where $P$ is a bounded complex of projectives and $L$ is a complex such that $\CX$-$\dim B_nL \leq d-1$ and $\CX$-$\dim Z_nL \leq d-1$ for all $n$. Since $P$ is a projective complex, it is a coproduct of finitely many  complexes of the form $e_{\la}^i(Q)$, where $Q$ is a projective $R$-module. Therefore $P \in \lan\CM\ran$. Moreover, by induction assumption, $\sum L \in \lan\CM\ran_{d+1}$. Hence
\[A \in \lan\lan\CM\ran \ast \lan\CM\ran_{d+1}\ran.\]
Now the associativity of the operation $\ast$ implies that \[\lan\lan\CM\ran \ast \lan\CM\ran_{d+1}\ran= \lan\lan\CM\ran_{d+1}\ast \lan\CM\ran\ran.\] So $A \in \lan\CM\ran_{d+2}.$ This completes the inductive step and hence the proof.
\end{proof}

Here we present two applications of the above theorem. \\

$(1)$ Let $\CA$ be an abelian category. It is proved by Han \cite{H} that if there exists $M\in\CA$ such that $\CA=\add M$, then $\dim\D^b(\CA)\leq 1.$ In the following we use the above theorem to provide a generalization of this result.

\begin{proposition}\label{Han}
Let $\CX \subseteq \CA$ be a representation-finite semi-resolving subcategory of $\CA$. Then $\dim\D^b(\CA) \leq \CX{\rm-}\dim\CA+1.$
\end{proposition}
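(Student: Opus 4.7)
The plan is to apply the preceding theorem in the most direct way: take the parameter $d$ there to be the global invariant $\CX$-$\dim\CA$. If $\CX$-$\dim\CA=\infty$ the stated inequality is vacuous, so I assume $d:=\CX$-$\dim\CA$ is a finite nonnegative integer. Since $\CX$ is representation-finite, I fix an object $M\in\CX$ with $\add\CX=\add M$ and set $\CM:=S^0(M)\oplus e_{\la}^0(M)$, which is a single object of $\D^b(\CA)$.

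Next I would take an arbitrary object of $\D^b(\CA)$ and represent it by a bounded complex $A=(A_n,f_n)$ of objects of $\CA$; this is standard, since every object of the bounded derived category is quasi-isomorphic to such a complex. For any such $A$, the boundaries $B_iA=\im f_{i+1}$ and the cycles $Z_iA=\Ker f_i$ are themselves objects of $\CA$, so by definition of $\CX$-$\dim\CA$ we have $\CX$-$\dim B_iA\leq d$ and $\CX$-$\dim Z_iA\leq d$ for every $i$. This is precisely the hypothesis required by the preceding theorem, which therefore yields $A\in\lan\CM\ran_{d+2}$.

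Since the chosen object of $\D^b(\CA)$ was arbitrary, this shows $\D^b(\CA)=\lan\CM\ran_{d+2}$. Unwinding the definition of $\dim\D^b(\CA)$ as the smallest $e\in\N$ for which $\D^b(\CA)=\lan N\ran_{e+1}$ for some $N$, this gives $\dim\D^b(\CA)\leq d+1=\CX$-$\dim\CA+1$, which is the bound to be established.

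I do not anticipate any genuine obstacle, as the proposition is essentially the theorem restated with the auxiliary parameter $d$ replaced by the supremum $\CX$-$\dim\CA$. The only small conceptual point worth highlighting is the design of the generator $\CM$: the summand $S^0(M)$ handles the part where the boundaries and cycles themselves lie in $\CX$ (this is the base case $d=0$ of the theorem), while the summand $e_{\la}^0(M)$, whose shifts and direct sums exhaust the projective complexes, absorbs the inductive step coming from Lemma~\ref{KrauseKussin}. In particular the case $d=0$ recovers Han's bound $\dim\D^b(\CA)\leq 1$ whenever $\CA=\add M$, confirming that the proposition genuinely generalises \cite{H}.
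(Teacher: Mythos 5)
Your proposal is correct and follows essentially the same route as the paper: reduce to a bounded complex, observe that all cycles and boundaries have $\CX$-dimension at most $d=\CX$-$\dim\CA$ by definition of the global invariant, and invoke the preceding theorem with generator $\CM=S^0(M)\oplus e_{\la}^0(M)$ to get $\D^b(\CA)=\lan\CM\ran_{d+2}$. You are in fact slightly more careful than the paper, which writes $\lan M\ran_{d+2}$ where the generator should be $\CM$; your version is the intended one.
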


\begin{proof}
If $\CX$-$\dim\CA=\infty$, there is nothing to prove. So assume that $\CX$-$\dim\CA$ is finite, say $d$. So for any complex $A=(A_n,f_n)$ in $\CA$, we have $\CX$-$\dim Z_iX\leq d$ and $\CX$-$\dim B_iX \leq d$. Assume that $\add\CX=\add M$, for some $M \in \CA$. The above theorem now apply to show that $A \in \lan M\ran_{d+2}$. Therefore $\D^b(\CA)=\lan M\ran_{d+2}$, since $A$ was an arbitrary bounded complex.
\end{proof}

$(2)$ Let $\Omega^d(\CA)$ be the full subcategory of $\CA$ consisting of all $d$th syzygies, i.e. all objects $A$ for them there exist an exact sequence \[0 \rt A \rt P^0 \rt \cdots \rt P^{d-1},\]
where $P^i$, for $0 \leq i \leq d-1$, is projective. Note that $\Omega^d(\CA)$-$\dim\CA=d$. Although we do not know wether $\Omega^d(\CA)$ is semi-resolving or not, but the proof of the above theorem can be rewritten in order to prove the following corollary. We leave the details to the reader.

\begin{corollary}\label{syzygy}
Let $\CA$ be an abelian category with enough projectives. If $\Omega^d(\CA)$ is representation-finite, then $\dim\D^b(\CA)\leq d+1.$
\end{corollary}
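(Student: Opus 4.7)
The plan is to mimic the proof of the preceding theorem with $\CX = \Omega^d(\CA)$, despite the fact that $\Omega^d(\CA)$ need not be semi-resolving. Fix $M \in \CA$ with $\add\Omega^d(\CA) = \add M$, and set $\CM = S^0(M) \oplus e^0_{\la}(M)$. Since $\Omega^d(\CA)$-$\dim\CA = d$ (as noted before the corollary), every bounded complex $A$ in $\CA$ automatically satisfies $\Omega^d(\CA)$-$\dim B_iA \leq d$ and $\Omega^d(\CA)$-$\dim Z_iA \leq d$, so the hypothesis of the theorem's inductive setup is in place. The task is then to reproduce that induction without appealing to semi-resolving.

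First I would establish two structural facts about $\Omega^d(\CA)$ that substitute for semi-resolving in Lemma \ref{KrauseKussin}. The first is that every projective $P \in \CA$ lies in $\Omega^d(\CA)$ via the trivially exact sequence $0 \rt P \st{\id}{\rt} P \rt 0 \rt \cdots \rt 0$. Consequently $\Prj\CA \subseteq \Omega^d(\CA) \subseteq \add M$, and every bounded complex of projectives still belongs to $\lan\CM\ran$. The second is a syzygy-reduction: given $0 \rt K \rt P \rt N \rt 0$ with $P$ projective and $N \in \Omega^d(\CA)$, splicing the embedding $N \hookrightarrow Q^0 \rt \cdots \rt Q^{d-1}$ with $P \twoheadrightarrow N$ yields the exact sequence $0 \rt K \rt P \rt Q^0 \rt \cdots \rt Q^{d-1}$, showing $K \in \Omega^{d+1}(\CA) \subseteq \Omega^d(\CA)$. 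A generalized Schanuel argument then upgrades this to the general form needed by Lemma \ref{KrauseKussin}: if $\Omega^d(\CA)$-$\dim N = t \geq 1$, then $\Omega^d(\CA)$-$\dim K \leq t-1$, by pulling back a length-$t$ $\Omega^d(\CA)$-resolution of $N$ along $P \twoheadrightarrow N$ and exploiting projectivity of $P$ to split off the resulting projective summand.

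With these two facts in hand, the proof of Lemma \ref{KrauseKussin} transfers to $\CX = \Omega^d(\CA)$ without alteration, as its reduction step uses only the presence of projectives in $\CX$ and the syzygy-reduction above, not any summand-closure of $\CX$. The inductive argument in the proof of the theorem then runs verbatim. Starting from a bounded complex $A$, each application of the adapted lemma produces a triangle $L \rt P \rt A$ with $P$ a bounded projective complex (hence in $\lan\CM\ran$) and $L$ a bounded complex whose cycles and boundaries have $\Omega^d(\CA)$-dim reduced by one. After $d$ iterations we reach a complex whose cycles and boundaries lie in $\Omega^d(\CA) \subseteq \add M$, which the $d=0$ base case of the theorem places in $\lan\CM\ran_2$. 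Associativity of $\ast$ combines the $d$ projective layers with this base piece to yield $A \in \lan\CM\ran_{d+2}$, whence $\dim\D^b(\CA) \leq d + 1$.

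The main obstacle is the generalized Schanuel step in the second structural fact: proving $\Omega^d(\CA)$-$\dim K \leq t - 1$ from an arbitrary length-$t$ $\Omega^d(\CA)$-resolution of $N$ and the projective syzygy sequence $0 \rt K \rt P \rt N \rt 0$, without invoking summand-closure of $\Omega^d(\CA)$. The key input is that the auxiliary summand produced by the pullback comparison is itself projective, hence already in $\Omega^d(\CA)$, and can therefore be absorbed into the resolution of $K$ rather than discarded; this is what allows the resolution length to decrease by one despite $\Omega^d(\CA)$ failing the strict summand-closure condition implicit in the semi-resolving hypothesis.
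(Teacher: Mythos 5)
Your two preliminary observations are sound and are exactly the right raw material: projectives do lie in $\Omega^d(\CA)$, and splicing $P\twoheadrightarrow N\hookrightarrow Q^0\rt\cdots\rt Q^{d-1}$ does show that a first syzygy of an object of $\Omega^d(\CA)$ lies in $\Omega^{d+1}(\CA)\subseteq\Omega^d(\CA)$. The genuine gap is the ``generalized Schanuel step.'' Given $0\rt K\rt P\rt N\rt 0$ with $P$ projective and an $\Omega^d(\CA)$-resolution $0\rt X_t\rt\cdots\rt X_0\rt N\rt 0$, the pullback of $P\twoheadrightarrow N$ against $X_0\twoheadrightarrow N$ produces (after splitting off $P$) the exact sequence $0\rt K\rt N_1\oplus P\rt X_0\rt 0$, where $N_1=\Ker(X_0\rt N)$. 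This exhibits $K$ as the kernel of an epimorphism \emph{onto} $X_0$; it is not an $\Omega^d(\CA)$-resolution of $K$ of length $t-1$, and there is no projective summand of $X_0$ to ``absorb.'' Already for $t=1$ your claim reduces to: the kernel of an epimorphism between two objects of $\Omega^d(\CA)$ again lies in $\Omega^d(\CA)$, which amounts to closure of $\Omega^{d-1}(\CA)$ under extensions --- precisely the kind of closure property the paper concedes it cannot verify when it says it does not know whether $\Omega^d(\CA)$ is semi-resolving. So the transfer of Lemma \ref{KrauseKussin} ``without alteration'' is not justified, and the inductive bookkeeping by $\Omega^d(\CA)$-dimension does not go through as written.

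The repair is to abandon the $\Omega^d(\CA)$-dimension bookkeeping and track syzygy order instead, using only your (correct) splicing fact. In the construction underlying Lemma \ref{KrauseKussin}, the cycles and boundaries of the kernel complex $L$ are kernels of epimorphisms from projectives onto the cycles and boundaries of $A$, i.e.\ first syzygies of them. Iterating, the cycles and boundaries of the $j$-th kernel complex $L^{(j)}$ lie in $\Omega^j(\CA)$ (a first syzygy of an object of $\Omega^{j-1}(\CA)$ lies in $\Omega^j(\CA)$ by your splice), so after $d$ iterations they lie in $\Omega^d(\CA)\subseteq\add M$. The base case of the theorem then puts $L^{(d)}$ in $\lan\CM\ran_2$, each projective layer lies in $\lan\CM\ran$ because $\Prj\CA\subseteq\add M$, and assembling the $d$ triangles gives $A\in\lan\CM\ran_{d+2}$, hence $\dim\D^b(\CA)\leq d+1$. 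This is presumably the rewriting the paper intends; note that it needs no dimension-drop statement for arbitrary objects of $\Omega^d(\CA)$-dimension $t$, only the membership statement $\Omega^1(\Omega^j(\CA))\subseteq\Omega^{j+1}(\CA)$ that you already proved.
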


\begin{remark}
We just remark that in case $\La$ is an artin algebra and $\CA=\mmod \La$, then $\Omega^d(\CA)$ is known as the subcategory of $\mmod\La$ consisting of $d$-torsionless modules. We say that $\La$ is a $d$-torsionless-finite algebra provided $\Omega^d(\CA)$ is representation-finite. The above corollary, in particular, shows that the derived dimension of $d$-torsionless-finite algebras is at most $d+1$.
\end{remark}

In the following we plan to show that, if $\CX$-$\dim\CA \geq 2$, we may get a sharper upper bound for the dimension of the derived category of $\CA$. The following lemma, which is a simple observation, plays a key role.

\begin{lemma}
Let $\CA$ be an abelian category. Let $\{M_i\}_{i=1}^n$ and $\{N_i\}_{i=1}^n$ be families of objects of $\CA$. Consider the following morphism
\[\bigoplus_{i=1}^n e_{\la}^i(M_i) \st{\varphi}{\lrt} \bigoplus_{i=1}^n e_{\la}^i(N_i)\]
of complexes and let $K=\Ker\varphi$ be its kernel. Then $K$ is quasi-isomorphic to the complex $S^0(H^0(K))$. Hence, in $\D^b(\CA)$ we have an isomorphism $K \cong S^0(H^0(K))$.
\end{lemma}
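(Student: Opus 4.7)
The key observation is that each summand $e_\la^i(A)$ is the null-homotopic complex $\cdots 0 \rt A \st{\id}{\rt} A \rt 0 \cdots$; concretely, it is the cone on the identity of $S^{i-1}(A)$. Hence both $P := \bigoplus_{i=1}^n e_\la^i(M_i)$ and $Q := \bigoplus_{i=1}^n e_\la^i(N_i)$ are contractible complexes, and in particular $P \simeq 0 \simeq Q$ in $\D^b(\CA)$.

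Factor $\varphi$ through its image, $P \twoheadrightarrow \im\varphi \hookrightarrow Q$, and consider the two short exact sequences
\[0 \rt K \rt P \rt \im\varphi \rt 0, \qquad 0 \rt \im\varphi \rt Q \rt \Coker\varphi \rt 0.\]
The associated long exact sequences in homology, combined with the acyclicity of $P$ and $Q$, yield natural isomorphisms
\[H_j(K) \cong H_{j+1}(\im\varphi) \cong H_{j+2}(\Coker\varphi)\]
for every $j$. Equivalently, in $\D^b(\CA)$ we have $K \simeq \Coker\varphi[-2]$. Since $P$ and $Q$ have support in $\{0,1,\ldots,n\}$, so do $K$ and $\Coker\varphi$, and this shift analysis already confines the non-zero cohomology of $K$ to the degrees $\{0,1,\ldots,n-2\}$.

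To pin this cohomology down to degree zero, I would use the identification $P \cong \mathrm{cone}(\id_{\tilde M})$ with $\tilde M := \bigoplus_{i=1}^n S^{i-1}(M_i)$ (and similarly for $Q$). Under this description any chain map $\varphi$ is parametrized by a pair $(\alpha, v)$ with $\alpha : \tilde M \rt \tilde N$ a graded morphism and $v : \tilde M[1] \rt \tilde N$ a ``homotopical'' component; the kernel $K$ can then be computed level by level in these coordinates. The crux --- and the main obstacle --- is verifying that the resulting contributions to $H_j(K)$ telescope for $j \geq 1$, leaving only the degree-zero piece. This telescoping is where the full force of the contractibility of $P$ and $Q$ is used, and it is what delivers the claimed isomorphism $K \cong S^0(H^0(K))$.
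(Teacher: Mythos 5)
Your reduction via the contractibility of $P=\bigoplus_{i=1}^n e_\la^i(M_i)$ and $Q=\bigoplus_{i=1}^n e_\la^i(N_i)$ is correct and, for the degrees it does control, cleaner than the paper's argument: the two triangles coming from your short exact sequences give $K\cong \Coker\varphi[-2]$ in $\D^b(\CA)$, and the support bounds confine the homology of $K$ to degrees $0,\dots,n-2$. In particular your argument already is a complete proof when $n\le 2$. However, the step you yourself flag as ``the crux --- and the main obstacle'', namely the vanishing of $H_j(K)$ for $1\le j\le n-2$, is the entire content of the lemma, and it is only described as a plan, not carried out; the proposal is therefore incomplete. (For comparison, the paper's own proof disposes of this step with the single assertion that it ``follows by an easy diagram chasing''.)

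The more serious point is that no telescoping argument can close this gap at the stated level of generality, because the vanishing fails for an arbitrary chain map $\varphi$. Take $n=3$, fix $A\neq 0$, and set $M_2=A$, $N_3=A$, with all other $M_i,N_i$ zero, so that $P=e_\la^2(A)$ is concentrated in degrees $2,1$ and $Q=e_\la^3(A)$ in degrees $3,2$. The map $\varphi$ that is $\id_A$ in degree $2$ and zero elsewhere is a chain map (the differential of $Q$ out of degree $2$ is zero), and its kernel is $K=S^1(A)$, so $H_1(K)=A\neq 0$ while $H_0(K)=0$; hence $K\not\cong S^0(H_0(K))=0$ in $\D^b(\CA)$. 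So the vanishing you were hoping to extract from the contractibility of $P$ and $Q$ alone is simply not there: the lemma needs extra hypotheses on $\varphi$, such as those implicitly available in its application in the proof of Theorem \ref{main}, where $\varphi$ is a differential in a projective resolution of a complex and additional exactness is at hand. Your instinct that the remaining step carries the real difficulty was exactly right; the honest conclusion is that neither your outline nor the paper's one-line diagram chase establishes it, and as stated it cannot be established.
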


\begin{proof}
It follows by an easy diagram chasing that for any $i>0$, $H^i(K)=0$. Therefore the natural morphism $K \rt S^0(H^0(K))$ provides the desired quasi-isomorphism.
\end{proof}

\begin{theorem}\label{main}
Let $\CX \subseteq \CA$ be a representation-finite semi-resolving subcategory of $\CA$. Set $\CX$-$\dim\CA=d$. Then
\begin{itemize}
\item [$(i)$] If $d=0$ or $1$, then $\dim\D^b(\CA) \leq d+1$.
\item [$(ii)$] If $d \geq 2$, then $\dim\D^b(\CA) \leq d$.
\end{itemize}
\end{theorem}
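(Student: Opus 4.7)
The plan is to reduce part $(ii)$ by induction to the case $d=2$ and there to use the preceding kernel lemma, via the octahedral axiom, to trim one step off the naive bound that comes from iterated application of Lemma \ref{KrauseKussin}.

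Part $(i)$ is immediate from Proposition \ref{Han}. For part $(ii)$, I argue by induction on $d \geq 2$. The inductive step ($d \geq 3$) is short: Lemma \ref{KrauseKussin} produces a triangle $L \to P \to A$ in $\D^b(\CA)$ in which $P$ is a finite coproduct of complexes of the form $e^i_{\la}(Q)$ with $Q \in \add M$, so that $P \in \lan\CM\ran$, while $L$ has cycles and boundaries of $\CX$-$\dim$ at most $d-1$. Since $d-1\geq 2$, the inductive hypothesis applies to $L$ and gives $L\in\lan\CM\ran_d$, so the triangle yields $A \in \lan\CM\ran \ast \lan\CM\ran_d \subseteq \lan\CM\ran_{d+1}$, which is exactly $\dim\D^b(\CA)\leq d$ for this complex.

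The substance of the theorem is the base case $d=2$, where I want $A\in\lan\CM\ran_3$. Here I plan to apply Lemma \ref{KrauseKussin} twice, obtaining triangles
\[
\xymatrix{L^{(1)} \ar[r] & P^{(1)} \ar[r] & A \ar@{~>}[r] & } \quad \text{and} \quad \xymatrix{L^{(2)} \ar[r] & P^{(2)} \ar[r] & L^{(1)} \ar@{~>}[r] & }
\]
with $P^{(1)}, P^{(2)}\in\lan\CM\ran$ and with $L^{(2)}$ already having its cycles and boundaries in $\CX$, so that the base case of the previous theorem gives $L^{(2)}\in\lan\CM\ran_2$. Applied to the composite $\alpha\colon P^{(2)}\to L^{(1)}\to P^{(1)}$, the octahedral axiom collapses these into a single triangle
\[
\xymatrix{\Sigma L^{(2)} \ar[r] & \mathrm{cone}(\alpha) \ar[r] & A \ar@{~>}[r] & },
\]
so the desired bound reduces to the claim that $\mathrm{cone}(\alpha)\in\lan\CM\ran$. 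Because both $P^{(1)}$ and $P^{(2)}$ are finite coproducts of complexes of the form $e^i_{\la}(Q)$, the preceding lemma identifies $\Ker\alpha$ in $\D^b(\CA)$ with a single stalk complex $S^0(H^0(\Ker\alpha))$, and correspondingly $\mathrm{cone}(\alpha)$ with a shifted stalk of $N := H^0(\Ker\alpha)$.

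The main obstacle is verifying that this stalk actually lies in $\lan\CM\ran$, i.e.\ that $N\in\add M$. I intend to establish this by unwinding the construction of Lemma \ref{KrauseKussin}: each $P^{(k)}$ is assembled from projective covers of the cycles and boundaries of $L^{(k-1)}$, and $\alpha$ is the two-step interaction of these covers across $L^{(1)}$. Because $L^{(2)}$ already has its cycles and boundaries in $\CX$, one expects the kernel lemma to cut out $N$ as a direct summand of a finite sum of such projective covers and of objects of $\CX$, placing it in $\add\CX=\add M$. Granted this identification, the triangle above yields $A\in\lan\CM\ran\ast\lan\CM\ran_2\subseteq\lan\CM\ran_3$, which closes the base case and the induction.
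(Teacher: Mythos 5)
Your overall strategy is close in spirit to the paper's: iterate Lemma \ref{KrauseKussin} and use the kernel lemma to show that the innermost piece needs only one layer of $\lan\CM\ran$ rather than two. But the octahedral detour in your base case contains a genuine gap. The kernel lemma controls $\Ker\alpha$, not $\mathrm{cone}(\alpha)$: a cone is not determined by a kernel, and there is no implication from ``$\Ker\alpha$ is quasi-isomorphic to a stalk'' to ``$\mathrm{cone}(\alpha)$ is quasi-isomorphic to a shifted stalk.'' Worse, your own octahedron exhibits $\mathrm{cone}(\alpha)$ in a triangle $\Sigma L^{(2)} \rt \mathrm{cone}(\alpha) \rt A$; indeed, since $P^{(2)} \rt L^{(1)}$ is epi and $L^{(1)} \rt P^{(1)}$ is mono, one has $\im\alpha \cong L^{(1)}$ and hence $\Coker\alpha \cong A$. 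So $\mathrm{cone}(\alpha)$ is assembled from $A$ itself, and the claim $\mathrm{cone}(\alpha) \in \lan\CM\ran$ is essentially as strong as the statement you are trying to prove: it is a bounded complex of projectives of length comparable to that of $A$, which in general needs many layers, not one. The unresolved claim $N \in \add M$ that you flag as the ``main obstacle'' is a symptom of this misrouting rather than a technical loose end.

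The repair is to discard the octahedron. The composite $\alpha\colon P^{(2)} \rt L^{(1)} \rt P^{(1)}$ is an honest chain map between projective complexes (each a finite coproduct of $e^i_{\la}(Q)$'s) with $\Ker\alpha = \Ker(P^{(2)} \rt L^{(1)}) = L^{(2)}$, because the second map is a monomorphism. The kernel lemma therefore applies \emph{directly to $\alpha$} and gives $L^{(2)} \cong S^0(H^0(L^{(2)}))$ in $\D^b(\CA)$, hence $L^{(2)} \in \lan\CM\ran_1$ provided $H^0(L^{(2)}) \in \add M$; climbing the two triangles then yields $L^{(1)} \in \lan\CM\ran_2$ and $A \in \lan\CM\ran_3$. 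This is exactly the paper's argument, run for general $d$: it constructs $0 \rt K^d \rt P^{d-1} \rt \cdots \rt P^0 \rt A \rt 0$, applies the kernel lemma to $K^d = \Ker(P^{d-1} \rt P^{d-2})$ (this is precisely where $d \geq 2$ is used), and climbs $d$ triangles to get $A \in \lan\CM\ran_{d+1}$. Two further points you still owe even after this correction: first, $H^0(L^{(2)}) \in \CX$ does not follow from Lemma \ref{KrauseKussin} as stated, which controls only cycles and boundaries, and $\CX$ need not be closed under the quotient $Z_0/B_0$; the paper extracts this from the explicit Krause--Kussin construction, whose last kernel has terms, cycles, boundaries and homologies all in $\CX$. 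Second, your inductive step for $d \geq 3$ must be phrased complex-by-complex (``if all $Z_i$, $B_i$ have $\CX$-dimension at most $e$ and $e \geq 2$, then $A \in \lan\CM\ran_{e+1}$''), since the theorem's hypothesis fixes $\CX$-$\dim\CA = d$ and the case $d-1$ of the theorem itself cannot literally be quoted for $L$.
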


\begin{proof}
$(i).$ If $\CX$-$\dim\CA$ is either zero or one, the result follows from Proposition \ref{Han}.

$(ii).$ Let $\CX$-$\dim\CA=d \geq 2$. Assume that $\add\CX=\add M.$ We claim that $\D^b(\CA)=\lan\CM\ran_{d+1}$, where $\CM=S^0(M)\oplus e_{\la}^0(M)$.
To prove the claim, let $A$ be a bounded complex in $\D^b(\CA)$. We may assume that
\[A \ = \ 0 \rt A_n \rt \cdots \rt A_1 \rt 0.\]
Using the same technique as in the proof of Lemma 2.5 of \cite{KK}, we get an exact sequence
\[0 \rt K^d \rt P^{d-1} \rt P^{d-2} \rt \cdots \rt P^1 \rt P^0 \rt A \rt 0,\]
of complexes such that $P^j$, for any $j \in \{0, 1, \cdots, d-1\}$ is a projective complex and $K^d$ is a bounded complex in which all terms, all kernels, all images and also all homologies belong to $\CX$. By \ref{proj}, each $P^j$ can be written as $\bigoplus_{i=1}^ne_{\la}^i(P^{j}_i)$, in which for all $i$ and $j$, $P_i^j$ is a projective object in $\CA$. Using this sequence, we obtain the following triangles in $\D^b(\CA)$
\[\xymatrix@C-0.5pc{K^d \ar[r] &  \bigoplus_{i=1}^ne_{\la}^i(P^{d-1}_i) \ar[r] & K^{d-1} \ar@{~>}[r] & },\]
\[\vdots\]
\[\xymatrix@C-0.5pc{K^1 \ar[r] &  \bigoplus_{i=1}^ne_{\la}^i(P^0_i) \ar[r] & A \ar@{~>}[r] & },\]

Therefore, since $H^0(K^d) \in \CX$, and by the above lemma, $K^d \cong S^0(H^0(K))$ in $\D^b(\CA)$, we may conclude that $K^d \in \lan\CM\ran$. Now the first triangle implies that $K^{d-1} \in \lan\CM\ran_2$. Continuing in this way, we finally get that $A \in \lan\CM\ran_{d+1}$. So the claim follows.
\end{proof}

\begin{corollary}
Let $\CA$ be an abelian category with enough projectives. Assume that $\Omega^d(\CA)$ is a representation-finite subcategory of $\CA$. Then  \begin{itemize}
\item [$(i)$] If $d=0$ or $1$, then $\dim\D^b(\CA) \leq d+1$.
\item [$(ii)$] If $d \geq 2$, then $\dim\D^b(\CA) \leq d$.
\end{itemize}
\end{corollary}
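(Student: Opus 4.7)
The plan is to adapt the argument of Theorem~\ref{main} for $\CX=\Omega^d(\CA)$, working around the fact that $\Omega^d(\CA)$ may not be semi-resolving. For part $(i)$, with $d\in\{0,1\}$, the bound $\dim\D^b(\CA)\leq d+1$ is immediate from Corollary~\ref{syzygy}, so no additional work is needed.

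For part $(ii)$, fix $M\in\CA$ with $\add M=\Omega^d(\CA)$ and set $\CM=S^0(M)\oplus e_{\la}^0(M)$. Given an arbitrary bounded complex $A=0\to A_n\to\cdots\to A_1\to 0$, I would first build an exact sequence
\[0\to K^d\to P^{d-1}\to\cdots\to P^0\to A\to 0\]
in $\C(\CA)$ by the technique used in the proof of Theorem~\ref{main}, with each $P^j$ a projective object of $\C(\CA)$. Because this sequence restricts at each horizontal degree $k$ to a finite projective resolution of $A_k$ in $\CA$, the term $K^d_k$ is a $d$-th syzygy of $A_k$ and therefore lies in $\Omega^d(\CA)=\add M$; in particular every term of $K^d$ is in $\add M$. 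By~\ref{proj}, each $P^j$ can be written as $\bigoplus_i e_{\la}^i(P^j_i)$ with $P^j_i$ a projective object of $\CA$; since projectives belong to $\add M$ and $\lan\,\cdot\,\ran$ is closed under shifts, each summand $e_{\la}^i(P^j_i)$ lies in $\lan e_{\la}^0(M)\ran$, so $P^j\in\lan\CM\ran$.

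Next I would apply the lemma preceding Theorem~\ref{main} to $K^d=\Ker(P^{d-1}\to P^{d-2})$ in order to obtain $K^d\cong S^0(H^0(K^d))$ in $\D^b(\CA)$. The crucial step, and the \emph{main obstacle}, is to verify $H^0(K^d)\in\Omega^d(\CA)$ without the semi-resolving hypothesis that made this automatic in Theorem~\ref{main}. This is handled by choosing the resolution in a Cartan--Eilenberg fashion, so that $H^0(K^d)$ appears as a $d$-th syzygy of an object of $\CA$ and therefore lies in $\Omega^d(\CA)=\add M$. Granting this, $K^d\cong S^0(H^0(K^d))\in\add S^0(M)\subseteq\lan\CM\ran$. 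Finally, the $d$ distinguished triangles $K^j\to P^{j-1}\to K^{j-1}$ in $\D^b(\CA)$, combined with $P^{j-1}\in\lan\CM\ran$, give inductively $K^{d-1}\in\lan\CM\ran_2$, $K^{d-2}\in\lan\CM\ran_3$, $\ldots$, and ultimately $A=K^0\in\lan\CM\ran_{d+1}$. Since $A$ was arbitrary, $\D^b(\CA)=\lan\CM\ran_{d+1}$ and so $\dim\D^b(\CA)\leq d$.
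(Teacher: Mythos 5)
Your part $(i)$ and the overall strategy for part $(ii)$ coincide with what the paper itself does (its proof of this corollary is a one-line reduction to Corollary \ref{syzygy} and to ``the method of Theorem \ref{main}''), and you are right to isolate the membership $H^0(K^d)\in\Omega^d(\CA)$ as the one step where the semi-resolving hypothesis was doing real work. Your observations that $K^d_k\cong \Omega^d(A_k)\oplus(\text{projective})\in\Omega^d(\CA)$ for the terms, and that each $P^j\in\lan\CM\ran$, are fine.

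The gap is in how you close that crucial step. A Cartan--Eilenberg resolution cannot be substituted here: its rows are complexes of projectives but they are not exact, hence are not projective objects of $\C(\CA)$, so you lose the decomposition $P^j=\bigoplus_i e_{\la}^i(P^j_i)$ on which both the membership $P^j\in\lan\CM\ran$ and the lemma giving $K^d\cong S^0(H^0(K^d))$ depend. Conversely, if every $P^j$ is a projective object of $\C(\CA)$ --- as the rest of the argument requires --- then each $P^j$ is an exact complex, and the long exact homology sequences of $0\to K^{j+1}\to P^j\to K^j\to 0$ force $H^i(K^{j+1})\cong H^{i+1}(K^j)$, hence $H^i(K^d)\cong H^{i+d}(A)$ for all $i$, \emph{independently of any choices made in the construction}. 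In particular $H^0(K^d)\cong H^d(A)$ is just a homology object of $A$ (take $A=S^d(X)$ with $X$ arbitrary to see it need not lie in $\Omega^d(\CA)$), and $K^d$ in general has nonzero homology in several degrees, so it is not even quasi-isomorphic to a stalk complex. Thus no choice of resolution compatible with the rest of the argument can make $H^0(K^d)$ a $d$-th syzygy, and the sentence ``this is handled by choosing the resolution in a Cartan--Eilenberg fashion'' does not close the gap. (To be fair, the same difficulty is already present and unaddressed in the paper's own proof of Theorem \ref{main}, where it is asserted without justification that all homologies of $K^d$ lie in $\CX$; but since you explicitly flag this as the main obstacle and then dispatch it with a single unproved sentence, your proof is incomplete at exactly that point.)
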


\begin{proof}
Part $(i)$ follows from the Corollary \ref{syzygy}. Part $(ii)$ can be proved following similar method as we used in the proof of the above theorem.
\end{proof}

Towards the end of the paper, we provide two applications of the above theorem, compare
\cite[Corollary 6.7]{B}.

$(1)$ Let $\CA$ be an abelian category with enough projective objects. An object $G\in \CA$ is called Gorenstein-projective, if there exits an exact sequence
\[\cdots \rt P_1 \rt P_0 \rt P_{-1} \rt \cdots\]
of projective objects such that the sequence remains exact under the functor $\CA( \ ,Q)$, for any projective object $Q$ and $G \cong \Ker(P_0 \rt P_{-1}).$ In module category, these modules was introduced by Auslander and Bridger in \cite{AB}, over commutative noetherian rings, where they called them modules of Gorenstein dimension zero. They has been generalized to arbitrary modules over arbitrary rings by Enochs and Jenda \cite{EJ}. They form the building blocks of a branch of homological algebra, known as Gorenstein homological algebra.

Let $\GPrj(\CA)$ denotes the class of all Gorenstein-projective objects. One can use this class to attach a homological dimension to any object $A$, namely the Gorenstein-projective dimension of $A$, denoted by $\Gpd A$. The Gorenstein global dimension of $\CA$ is defined to be the supremum of the Gorenstein-projective dimension of all objects and will be denoted by $\Ggldim \CA$. The above theorem implies that
\[\dim\D^b(\CA)\leq \max\{2,\Ggldim \CA\},\]
provided $\GPrj(\CA)=\add A$, for some $A \in \CA$.

$(2)$ An important special case of $(i)$, is $\CA=\mmod \La$, where $\La$ is an artin algebra. Let $\Gprj\La$ denote the full subcategory of $\mmod\La$ consisting of all Gorenstein-projective $\La$-modules.

\begin{definition}
An artin algebra $\La$ is called of finite Cohen-Macaulay type, finite CM-type for short, if the full subcategory $\Gprj\La$ is of finite representation type.
\end{definition}

If $\La$ is a self-injective algebra, then it is of finite CM-type if and only if it is of finite representation type.
Recall that an artin algebra $\La$ is called Gorenstein if $\id_{\La}\La<\infty$ and $\id\La_{\La}<\infty$. In this case it is known that $\id_{\La}\La=\id\La_{\La}$. If we let this common value to be $d$, then we call $\La$ a $d$-Gorenstein algebra. It is known that if $\La$ is a $d$-Gorenstein algebra, then $\Omega^d(\La)=\Gprj\La$. So we get the following result.

\begin{corollary}\cite[Corollary 6.7]{B}
Let $\La$ be a $d$-Gorenstein artin algebra of finite CM-type. Then $\derdim\La\leq \max\{2,d\}$.
\end{corollary}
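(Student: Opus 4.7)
The plan is to apply Theorem \ref{main} directly to the abelian category $\CA = \mmod\La$ with the subcategory $\CX = \Gprj\La$. Three hypotheses must be verified: that $\Gprj\La$ is semi-resolving, that it is representation-finite, and that its dimension inside $\CA$ is at most $d$. Once these are in hand, the conclusion follows immediately from the case split in Theorem \ref{main}.

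For the first hypothesis, I would cite the standard fact from Gorenstein homological algebra that $\Gprj\La$ is a resolving subcategory of $\mmod\La$: it contains all projectives, and it is closed under extensions and under kernels of epimorphisms (the latter being the non-trivial closure property, which follows from the definition via complete projective resolutions). Resolving subcategories are in particular semi-resolving, as remarked after the definition of semi-resolving in the paper. The second hypothesis is immediate from the assumption that $\La$ has finite CM-type, since by definition this means exactly that $\add(\Gprj\La) = \add G$ for some $G \in \Gprj\La$.

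The third hypothesis is the real content of the argument, and it is where the $d$-Gorenstein assumption enters. I would use the identification $\Omega^d(\La) = \Gprj\La$ recalled in the paper just before the statement. Given any module $A$ and a projective resolution $\cdots \to P_1 \to P_0 \to A \to 0$, truncation at stage $d$ gives an exact sequence
\[0 \lrt \Omega^d A \lrt P_{d-1} \lrt \cdots \lrt P_0 \lrt A \lrt 0,\]
and by the identification $\Omega^d A$ lies in $\Gprj\La$; since projectives are themselves Gorenstein-projective, this exhibits a $\Gprj\La$-resolution of $A$ of length at most $d$. Hence $\Gprj\La$-$\dim A \leq d$ for every $A \in \mmod\La$, so $\Gprj\La$-$\dim\CA \leq d$.

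With all hypotheses verified, Theorem \ref{main} yields $\dim\D^b(\CA) \leq d$ when $d \geq 2$ and $\dim\D^b(\CA) \leq d+1 \leq 2$ when $d \in \{0,1\}$. In either case the bound is $\max\{2,d\}$, which is $\derdim\La \leq \max\{2,d\}$ as claimed. There is no serious obstacle here: the heavy lifting has been done in Theorem \ref{main}, and the only thing one must be careful about is the low-dimensional boundary cases $d=0,1$, where the bound from the theorem is $d+1$ rather than $d$—this is precisely why the maximum with $2$ appears in the statement.
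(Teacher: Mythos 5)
Your proposal is correct and follows essentially the same route as the paper: both reduce the statement to Theorem \ref{main} via the identification $\Omega^d(\La)=\Gprj\La$ for a $d$-Gorenstein algebra. You simply make explicit the verifications (resolving hence semi-resolving, representation-finiteness from finite CM-type, and $\Gprj\La$-$\dim(\mmod\La)\leq d$ by truncating projective resolutions) that the paper leaves implicit.
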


\begin{proof}
To prove the corollary, one just should note that by our assumptions $\Omega^d(\La)=\Gprj\La.$ Now the result follows from Theorem \ref{main}.
\end{proof}

\section*{Acknowledgments}
Ryo Takahashi mentioned that T. Aihara, T. Araya, M. Yoshiwaki and himself also have a proof for Theorem \ref{main} \cite{AATY}. Their approach is completely different from us. We thank them for letting us to have the preliminary version of their preprint. The authors thank the Center of Excellence for Mathematics (University of Isfahan).

\end{document}